\newtheorem{theorem}{Theorem}[section]
\newtheorem{lemma}[theorem]{Lemma}
\theoremstyle{definition}
\newtheorem{definition}[theorem]{Definition}
\theoremstyle{remark}
\numberwithin{equation}{section}
\def\Xint#1{\mathchoice
  {\XXint\displaystyle\textstyle{#1}}%
  {\XXint\textstyle\scriptstyle{#1}}%
  {\XXint\scriptstyle\scriptscriptstyle{#1}}%
  {\XXint\scriptscriptstyle\scriptscriptstyle{#1}}%
  \!\int}
\def\XXint#1#2#3{{\setbox0=\hbox{$#1{#2#3}{\int}$}
    \vcenter{\hbox{$#2#3$}}\kern-.5\wd0}}
\def\avgint{\Xint-}
\begin{document}

\title[Reverse H\"older Inequality on spaces of homogeneous type] {Sharp Reverse H\"older property for $A_\infty$ weights on spaces of homogeneous type}

\author{Tuomas Hyt\"onen}
\address{Department of Mathematics and Statistics, University of Helsinki, P.O.B. 68, FI-00014 Helsinki, Finland}
\email{tuomas.hytonen@helsinki.fi}

\author{Carlos P\'erez}
\address{Departamento de An\'alisis Matem\'atico,
Facultad de Matem\'aticas, Universidad de Sevilla, 41080 Sevilla,
Spain} \email{carlosperez@us.es}

\author{Ezequiel Rela}
\address{Departamento de An\'alisis Matem\'atico,
Facultad de Matem\'aticas, Universidad de Sevilla, 41080 Sevilla,
Spain} \email{erela@us.es}
\thanks{The first author is supported by the European Union through the ERC Starting Grant “Analytic-probabilistic methods for borderline singular integrals”, and by the Academy of Finland, grants 130166 and 133264. The second author is supported by the Spanish Ministry of Science and Innovation grant MTM2009-08934, the second and third authors are also supported by the Junta de Andaluc\'ia, grant FQM-4745.}

\subjclass{Primary: 42B25. Secondary: 43A85.}

\keywords{Space of homogeneous type,   Muckenhoupt  weights, Reverse H\"older, Maximal functions}

\begin{abstract} 
In this article we present a new proof of  a sharp Reverse H\"older Inequality for $A_\infty$ weights. 
Then we derive two applications: a precise open property of   Muckenhoupt  classes and, as a consequence of this last result, we obtain a simple proof of a sharp weighted bound for the Hardy-Littlewood maximal function involving $A_\infty$ constants:
\begin{equation*}
  \|M\|_{L^p(w)} \leq c\, \left( \frac{1}{p-1}  [w]_{A_p}[\sigma]_{A_\infty}\right)^{1/p},
\end{equation*}
where $1<p<\infty$, $\sigma=w^{\frac{1}{1-p}}$ and $c$ is a dimensional constant. Our approach allows us to extend the result to the context  of spaces of homogeneous type and prove a \emph{weak} Reverse H\"older Inequality which is still sufficient to prove the open property for $A_p$ classes and the $L^p$ boundedness of the maximal function. In this latter case, the constant $c$ appearing in the norm inequality for the maximal function depends only on the doubling constant of the measure $\mu$ and the geometric constant $\kappa$ of the quasimetric.   

\end{abstract}

\maketitle

\section{Introduction and main results}

\subsection{Introduction}
In this article we present a new proof of a sharp Reverse H\"older Inequality (RHI) for $A_\infty$ weights. The sharpness of the result relies on the precise dependence of the exponent involved on the $A_\infty$  constant  of the weight. We improve on the result from \cite{HP}, where there is proved a sharp RHI for $\mathbb{R}^d$.  We present here a new and simpler approach, with the extra advantage of allowing us to extend the result to any space of homogeneous type. In  this case, we obtain a \emph{weak}  RHI which is still sharp in the dependence on the $A_\infty$ constant of the weight.
Furthermore this new approach gives a better result within the simplest context  which for the sake of clarity we present first in Section \ref{simplecase}  both the statement and proof.  The rest of the paper is devoted to the general case, namely when the underlying space is of homogeneous type.

We also present two results derived from the sharp RHI: first, a precise open property for $A_p$ weights (Theorem \ref{thm:Apselfimprove}), which is new even in the  standard case of $\mathbb{R}^d$ for $A_p$ classes defined over cubes. By this we mean that we exhibit, for the classical  $A_p\Rightarrow A_{p-\varepsilon}$ theorem, a quantitative analysis of the dependence of $\varepsilon$ on the $A_\infty$ constant of the weight. This sort of result was already stated in \cite{Buckley}, but here we present an improvement of that estimate. As a second consequence of our main result, we provide a simple proof of a  mixed weighted bound for the Hardy-Littlewood maximal function (Theorem \ref{thm:CorsharpBuckley}), originally proved in \cite{HP} in the usual context of  $\mathbb{R}^d$.    There are two reasons why this new result is of interest. First because it gives a new refinement of the well known improvement of  Muckenhoupt's classical theorem due to S. Buckley \cite{Buckley}. The second reason is because it also gives an interesting an unexpected improvement of the so called $A_2$ theorem (\cite{Hytonen:A2} \cite{HPTV})  as shown in \cite{HP} (see also \cite{HL-Ap-Ainf} and \cite{HLP}).

One of the main difficulties arising in the setting of spaces of homogeneous spaces is the absence of dyadic cubes, which is a useful and commonly used tool in analysis on metric spaces. In \cite{Christ}, Christ developed a substitute for dyadic cubes which has been exploited since then to overcome this obstacle. See, for instance, \cite{ABI-comparison},  where the authors study the relation between dyadic and classical maximal functions. There is also proved a qualitative reverse H\"older  inequality, which yields the standard $A_p \Rightarrow A_{p-\varepsilon}$ theorem. See  the recent work \cite{HytKai} for a construction of dyadic systems in this generality with application to weighted inequalities.

However, we present here an approach avoiding the use of dyadic sets, and we work directly with the natural quasimetric, following carefully the dependence on the geometric constants.  Some of the main ideas come from \cite{MacManus-Perez-98} where it was crucial to avoid the dyadic sets in order to get sharp bounds. 

Let us start with some standard definitions. A quasimetric $d$ on a set $\mathcal{S}$ is a function $d:{\mathcal S} \times
{\mathcal S} \rightarrow [0,\infty)$ which satisfies
\begin{enumerate}
 \item $d(x,y)=0$ if and only if $x=y$;
\item $d(x,y)=d(y,x)$ for all $x,y$;
 \item there exists a finite constant $\kappa \ge 1$ such that, for all $x,y,z \in \mathcal{S}$,
\begin{equation*}
d(x,y)\le \kappa (d(x,z)+d(z,y)).
\end{equation*}

\end{enumerate}
As usual, given $x \in \mathcal{S}$ and $r > 0$,  let $B(x,r) = \{y \in {\mathcal{S}} :d(x,y)
< r\}$ be the ball with center $x$ and radius $r$.  If $B=B(x,r)$ is a ball, we denote its radius $r$ by $r(B)$ and its center $x$ by $x_B$. A space of homogeneous type $({\mathcal{S}},d,\mu)$ is a set $\mathcal{S}$ together
with a quasimetric $d$ and a nonnegative Borel measure $\mu$ on
$\mathcal{S}$ such that the doubling condition
\begin{equation}\label{eq:doubling}
 \mu(B(x,2r)) \le C\mu(B(x,r))
\end{equation}
holds for all $x\in \mathcal{S}$ and $r>0$. As usual, the dilation of a ball $B(x,\lambda r)$ with $\lambda>0$ will be denoted by $\lambda B$.

If $C_\mu$ is the smallest constant for which \eqref{eq:doubling} holds,
then the number $D_\mu = \log_2 C_\mu$ is called the doubling order of
$\mu$. By iterating \eqref{eq:doubling}, we have
\begin{equation}\label{eq:doublingEXP}
\frac{\mu(B)}{\mu(\tilde{B})} \le
C^{2+\log_2\kappa}_{\mu}\left(\frac{r(B)}{r(\tilde{B})}\right)^{D_\mu} \;\mbox{for all
balls}\; \tilde{B} \subset B. 
\end{equation}
Note that $C^{2+\log_2\kappa}_{\mu}=(4\kappa)^{D_\mu}$. A particular case that we will use  is the following elementary inequality. Let $B$ be a ball and let $\lambda>1$. Then 
\begin{equation}\label{eq:doublingDIL}
 \mu(\lambda B) \le (2\lambda)^{D_\mu}   \mu(B)
\end{equation}

Throughout this paper, we will say that a constant $c=c(\kappa,\mu)>0$ is a \emph{structural constant} if it  depends only on the quasimetric constant $\kappa$ and the doubling constant $C_\mu$. The latter will often appear as a dependence on the doubling order $D_\mu$.

In a general space of homogeneous type, the balls $ B(x,r)$ are not necessarily open, but by a theorem of
Macias and Segovia \cite{MS}, there is a continuous quasimetric
$d'$ which is equivalent to $d$ (i.e., there are positive
constants $c_{1}$ and $c_{2}$ such that $c_{1}d'(x,y)\le d(x,y)
\le c_{2}d'(x,y)$ for all $x,y \in \mathcal{S}$) for which every ball is
open. We always assume that the quasimetric $d$ is continuous and
that balls are open. 

We will adopt the usual notation: if $\nu$ is a measure and $E$ is a measurable set, $\nu(E)$ denotes the $\nu$-measure of $E$. Also, if $f$ is a measurable function on $(\mathcal S,d,\mu)$ and $E$ is a measurable set, we will use the notation $f(E):=\int_E f(x)\ d\mu$. 
We also will denote the $\mu$-average of $f$ over a ball $B$ as 
\begin{equation*}
f_{B} = \avgint_B f d\mu = \frac{1}{\mu(B)} \int_B f d\mu. 
\end{equation*}
In several occasions we will need Lebesgue's differentiation theorem, so we will assume that it is valid for the spaces under study. This will follow if we assume that the family of continuous functions with compact support 
is a dense family in $L^1(\mu)$.

We recall that a weight $w$ (any non negative measurable function) satisfies the $A_p$ condition for $1<p<\infty$ if
\begin{equation*}
   [w]_{A_p}:=\sup_B\left(\avgint_B w\ d\mu\right)\left(\avgint_B w^{-\frac{1}{p-1}}\ d\mu\right)^{p-1},
\end{equation*}
where the supremum is taken over all the balls in $\mathcal{S}$. Since the $A_{p}$ classes are increasing with respect to $p$, we can define the $A_{\infty}$ class in the natural way by $A_{\infty}:=\bigcup_{p>1}A_p$. 
This class of weights can also be characterized by means of an appropriate constant. In fact, there are various different definitions of this constant, all of them equivalent in the sense that  they define the same class of weights. Perhaps the more classical and known definition is the following  due  to Hru\v{s}\v{c}ev
\cite{Hruscev} (see also \cite{GCRdF}):
\begin{equation*}
[w]^{exp}_{A_\infty}:=\sup_B \left(\avgint_{B} w\,d\mu\right) \exp \left(\avgint_{B} \log w^{-1}\,d\mu  \right).
\end{equation*}

However, in \cite{HP} the authors use a ``new'' $A_\infty$ constant (which was originally introduced  by Fujii in \cite{Fujii} and later by Wilson in \cite{Wilson:87}), which seems to be better suited. Let $M$ stands for the usual uncentered Hardy-Littlewood maximal operator:
\begin{equation*}
Mf(x) = \sup_{B\ni x } \avgint_{B} |f|\,d\mu.
\end{equation*}
The we can define
 \begin{equation*}
     [w]_{A_\infty}:= [w]^{W}_{A_\infty}:=\sup_B\frac{1}{w(B)}\int_B M(w\chi_B )\ d\mu.
 \end{equation*}
When the underlying space is $\mathbb{R}^d$, it is easy to see that $[w]_{A_\infty}\le c [w]^{exp}_{A_\infty}$ for some structural $c>0$.  In fact, it is shown in \cite{HP} that there are examples  showing that $[w]_{A_\infty}$ is much smaller than $[w]^{exp}_{A_\infty}$. The same line of ideas yields the inequality in this wider scenario.

\subsection{Main results}

\

We first present the statement of our main theorem in full generality, within the context of a space of homogeneous type $(\mathcal S,d,\mu)$. We have the following theorem:

\begin{theorem}[Sharp  weak Reverse H\"older Inequality]\label{thm:SharpRHI}

Let $w\in A_\infty$. Define the exponent $r(w)$ as 
\begin{equation*}
r(w)=1+\frac{1}{\tau_{\kappa\mu}[w]_{A_{\infty}}}:=
1+\frac{1}{ 6(32\kappa^2(4\kappa^2+\kappa)^2)^{D_\mu}[w]_{A_\infty}},
\end{equation*}
where $\kappa$ is the quasimetric constant and $D_\mu$ is the doubling order of the measure from \eqref{eq:doublingEXP}.

Then,
\begin{equation*}
  \left(\avgint_B w^{r(w)}\ d\mu\right)^{1/r(w)}\leq 2(4\kappa)^{D_\mu}\avgint_{2\kappa B} w\ d\mu,
\end{equation*}
where $B$ is any ball in $\mathcal S$.
\end{theorem}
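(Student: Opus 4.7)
My plan is a layer cake decomposition combined with a Vitali/Calder\'on-Zygmund stopping-time argument on the level sets of $M(w\chi_{2\kappa B})$, with the Fujii-Wilson definition of $[w]_{A_\infty}$ serving as the quantitative bridge between the $\mu$-measure of those level sets and the $w$-measure of the level sets of $w$ itself; this last conversion is precisely what forces the exponent $r-1\sim 1/[w]_{A_\infty}$.

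Concretely, I would start from the identity
$$\int_B w^r\,d\mu=(r-1)\int_0^\infty t^{r-2}\,w\bigl(\{x\in B:w(x)>t\}\bigr)\,dt,$$
and split the $t$-integral at $t_0=w_{2\kappa B}$. The low piece $0<t\le t_0$ is bounded trivially by $w(B)t_0^{r-1}$, which by \eqref{eq:doublingDIL} is at most $(4\kappa)^{D_\mu}\mu(B)(w_{2\kappa B})^r$; this yields the factor $(4\kappa)^{D_\mu}$ in the conclusion. For the high piece, since $B(x,s)\subseteq 2\kappa B$ whenever $x\in B$ and $s<r(B)$, Lebesgue differentiation gives $w(x)\le M(w\chi_{2\kappa B})(x)$ for $\mu$-a.e.\ $x\in B$, and hence $\{w>t\}\cap B\subseteq\Omega_t:=\{M(w\chi_{2\kappa B})>t\}$.

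For each $t>t_0$ I would apply the Vitali covering lemma to $\Omega_t$ to extract a disjoint family of stopping balls $\{B_j^t\}$ with $\avgint_{B_j^t}w\chi_{2\kappa B}\,d\mu>t$. The quasimetric version of the 5-Vitali dilation produces the constant $4\kappa^2+\kappa$ inside $\tau_{\kappa\mu}$, and the constraint $t>w_{2\kappa B}$ forces these stopping balls to be small enough that, after Vitali dilation, they sit inside a fixed structural enlargement of $B$; applying \eqref{eq:doublingDIL} to this enlargement contributes the remaining factor $(32\kappa^2(4\kappa^2+\kappa)^2)^{D_\mu}$. The Fujii-Wilson bound $\int_{2\kappa B}M(w\chi_{2\kappa B})\,d\mu\le[w]_{A_\infty}w(2\kappa B)$, combined with $M(w\chi_{2\kappa B})>t$ on each $B_j^t$, yields $\sum_j\mu(B_j^t)\le[w]_{A_\infty}w(2\kappa B)/t$; together with the covering this delivers a decay estimate of the form $w(\{w>t\}\cap B)\le C_{\kappa,\mu}\,[w]_{A_\infty}\,(w_{2\kappa B}/t)^{\gamma}\,w(2\kappa B)$ for some $\gamma>0$ obtained by iterating the CZ step (or equivalently by the standard $A_\infty$ self-improvement). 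Substituting back into the layer cake, the choice $r-1=1/(\tau_{\kappa\mu}[w]_{A_\infty})$ makes the tail integral convergent and comparable to the low-$t$ contribution.

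The hard part will be the absence of dyadic cubes: in $\mathbb{R}^d$ the Calder\'on-Zygmund decomposition delivers stopping cubes with \emph{two-sided} bounds on the averages, while Vitali in a quasimetric space provides only the lower bound. This is what forces the enlargement from $B$ to $2\kappa B$ on the right (the sense in which the RHI is \emph{weak}) and requires careful bookkeeping of every quasimetric composition and doubling iteration, ultimately producing the explicit structural constant $\tau_{\kappa\mu}$. A secondary difficulty is that $[w]_{A_\infty}$ bounds an $L^1(\mu)$ integral of the maximal function, whereas the layer cake is naturally in $w$-measure; this mismatch is bridged on each stopping ball via the pointwise inequality $w\le M(w\chi_{2\kappa B})$ on $2\kappa B$, which gives $w(B_j^t)\le\int_{B_j^t}M(w\chi_{2\kappa B})\,d\mu$ and therefore allows the Fujii-Wilson constant to be fed into the final decay estimate.
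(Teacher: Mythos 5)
Your proposal diverges from the paper's route, and as sketched it has a genuine gap at the step where the whole difficulty of the theorem is concentrated. Steps one through five (layer cake for $\int_B w^r\,d\mu$, splitting at $t_0=w_{2\kappa B}$, the inclusion $\{w>t\}\cap B\subset\Omega_t$, and Vitali covering of $\Omega_t$) are fine, and the Chebyshev bound $\sum_j\mu(B_j^t)\le[w]_{A_\infty}w(2\kappa B)/t$ is correct. But that bound decays only like $1/t$ in $\mu$-measure, which makes $\int_{t_0}^\infty t^{r-2}\mu(\{w>t\}\cap B)\,dt$ diverge for any $r>1$; and your ``secondary difficulty'' paragraph produces only $\sum_j w(B_j^t)\le[w]_{A_\infty}w(2\kappa B)$ with \emph{no} $t$-decay at all. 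Everything hinges on the decay estimate $w(\{w>t\}\cap B)\le C[w]_{A_\infty}(w_{2\kappa B}/t)^{\gamma}w(2\kappa B)$, which you obtain by citing ``iterating the CZ step'' or ``standard $A_\infty$ self-improvement.'' The latter is circular: the sharp $A_\infty$ decay estimate in a space of homogeneous type with $\gamma\sim 1/[w]_{A_\infty}$ is essentially equivalent to the RHI being proved. The former is not developed: a Vitali cover gives only the lower bound $w_{B_j^t}>t$, and an iteration that controls the escape of $w$-mass to higher levels requires precisely the \emph{upper} bound on averages over enlarged balls (property (iv) of the paper's Lemma \ref{lem:localstoppingtime}), which you do not construct and which is the technical heart of the paper's covering machinery.

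There is also a sharpness issue even if the decay estimate were granted in the form you wrote. With $\gamma\sim 1/[w]_{A_\infty}$ the tail integral produces a factor $(r-1)[w]_{A_\infty}/(\gamma-(r-1))$, and the extra $[w]_{A_\infty}$ in front of your decay estimate forces $r-1\lesssim 1/[w]_{A_\infty}^2$ in order to keep the final constant structural, not the claimed $1/[w]_{A_\infty}$. To get the sharp exponent you would need the decay estimate with a purely structural constant in front (i.e.\ the $[w]_{A_\infty}^{\gamma}$ that appears must be recognized as bounded), and this is exactly the bookkeeping that the proposal does not perform.

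Finally, a structural remark: the paper avoids any iteration and any $w$-measure decay estimate. It bounds $\int_B w^{1+\varepsilon}\,d\mu\le\int_{\widehat B}(M_{\mathcal B}w)^{\varepsilon}w\,d\mu$, runs the layer cake in $\Omega_\lambda$ rather than $\{w>t\}$, converts $w(B_i^*)$ to $\lambda\mu(B_i)$ via the two-sided average control of Lemma \ref{lem:localstoppingtime}, and closes the loop by a separate self-improving estimate for the \emph{maximal function} itself (Lemma \ref{lem:generalRHMaximal}), which is what actually carries the factor $[w]_{A_\infty}$. Your explanation of the ``weak'' (dilated ball) nature of the inequality --- that Vitali provides only a lower average bound --- also misidentifies the source: the paper's CZ lemma does give two-sided control; the dilation arises because the local maximal operator $M_{\mathcal B}$ over balls centered in $B_0$ unavoidably samples the weight on $(1+\delta)\kappa B_0$.
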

 Note that on the right hand side, we have a dilation of the ball $B$, and that is the reason of calling this estimate a \emph{weak} inequality. We also remark here that the fact that on the right hand side we obtain a structural constant is crucial for the applications. We refer the reader to the work of Kinnunen \cite{Kinnunen} for an $A_1$ version of a reverse H\"older inequality.
 
As we already mentioned, from this theorem we will derive two results. The first is a ``precise open property'' for $A_p$ weights. We explicitly compute an admissible value of $\varepsilon>0$ such that any $A_p$ weight $w$ belongs to $A_{p-\varepsilon}$.

\begin{theorem} [The Precise Open property] \label{thm:Apselfimprove} Let $1<p<\infty$ and let $w\in
A_p$. Recall that for a weight $w$ we defined in Theorem \ref{thm:SharpRHI} the quantity $r(w)=1+\frac{1}{\tau_{\kappa\mu}[w]_{A_{\infty}}}$. Then  $w\in A_{p-\varepsilon} $ where 
\begin{equation*}
\varepsilon =\frac{p-1}{r(\sigma)' }= \frac{p-1}{ 1+\tau_{\kappa\mu}[\sigma]_{A_{\infty}} } 
\end{equation*}
where as usual $\sigma=w^{1-p'}$ and $p'$ is the dual exponent of $p$: $\frac{1}{p}+\frac{1}{p'}=1$. Furthermore,
\begin{equation}\label{eq:Ap-e}
 [w]_{A_{p-\varepsilon}} \le  2^{p-1}(4\kappa)^{pD_\mu}[w]_{A_p}
\end{equation} 
\end{theorem}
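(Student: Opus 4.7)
The plan is to use Theorem~\ref{thm:SharpRHI} (the weak RHI) applied to the dual weight $\sigma = w^{1-p'}$, exploiting the algebraic identity that makes $w^{-1/(p-\varepsilon-1)}$ equal a power of $\sigma$.

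First I would observe that $w \in A_p$ implies $\sigma \in A_{p'} \subset A_\infty$, so $r(\sigma) = 1 + \tfrac{1}{\tau_{\kappa\mu}[\sigma]_{A_\infty}}$ is well defined. The key algebraic choice is to pick $\varepsilon$ so that $w^{-1/(p-\varepsilon-1)} = \sigma^{r(\sigma)}$. Since $w^{-1/(p-\varepsilon-1)} = \sigma^{(p-1)/(p-\varepsilon-1)}$, this forces $r(\sigma) = \tfrac{p-1}{p-\varepsilon-1}$, equivalently $\varepsilon = (p-1)\bigl(1-\tfrac{1}{r(\sigma)}\bigr) = \tfrac{p-1}{r(\sigma)'}$, which is exactly the value claimed.

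Next, I would apply Theorem~\ref{thm:SharpRHI} to the weight $\sigma$ on an arbitrary ball $B$ to get
\begin{equation*}
\left(\avgint_B \sigma^{r(\sigma)}\, d\mu\right)^{1/r(\sigma)} \le 2(4\kappa)^{D_\mu} \avgint_{2\kappa B} \sigma\, d\mu.
\end{equation*}
Raising to the power $r(\sigma)(p-\varepsilon-1) = p-1$ (by construction) and rewriting the left side via the identity above yields
\begin{equation*}
\left(\avgint_B w^{-1/(p-\varepsilon-1)}\, d\mu\right)^{p-\varepsilon-1} \le \bigl[2(4\kappa)^{D_\mu}\bigr]^{p-1}\left(\avgint_{2\kappa B} \sigma\, d\mu\right)^{p-1}.
\end{equation*}

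Finally, to bound $[w]_{A_{p-\varepsilon}}$, I would multiply the previous display by $\avgint_B w\, d\mu$. The only remaining issue is that the $\sigma$-average sits on $2\kappa B$ while the $w$-average sits on $B$; to apply the $A_p$ condition on the dilated ball, I pass from $B$ to $2\kappa B$ in the $w$-average using the doubling estimate \eqref{eq:doublingDIL}, which costs a factor of $(4\kappa)^{D_\mu}$. Then the product $\bigl(\avgint_{2\kappa B} w\bigr)\bigl(\avgint_{2\kappa B} \sigma\bigr)^{p-1}$ is bounded by $[w]_{A_p}$, and combining constants gives exactly $2^{p-1}(4\kappa)^{pD_\mu}[w]_{A_p}$, matching \eqref{eq:Ap-e}.

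There is no real obstacle here; the proof is a direct corollary of Theorem~\ref{thm:SharpRHI}. The mildly delicate point is bookkeeping the exponents so that the numerical value of $\varepsilon$ matches the form $(p-1)/r(\sigma)'$ stated in the theorem, and accounting for the enlargement $B \mapsto 2\kappa B$ that is the price of working with the \emph{weak} reverse Hölder inequality in the space of homogeneous type setting.
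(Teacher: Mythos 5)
Your proof is correct and follows essentially the same route as the paper's: choose $\varepsilon$ so that $r(\sigma) = \tfrac{p-1}{p-\varepsilon-1}$, apply the weak RHI of Theorem~\ref{thm:SharpRHI} to $\sigma$, raise to the $(p-1)$-th power, and then use one more application of the doubling inequality \eqref{eq:doublingDIL} to move the $w$-average from $B$ to $2\kappa B$ so the $A_p$ condition applies. The constant bookkeeping $2^{p-1}(4\kappa)^{(p-1)D_\mu}\cdot(4\kappa)^{D_\mu}=2^{p-1}(4\kappa)^{pD_\mu}$ also matches the paper exactly.
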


Finally, as an application of this last result, we present a short proof of the following mixed bound for the maximal function.

\begin{theorem}\label{thm:CorsharpBuckley}   Let $M$ be the Hardy-Littlewood maximal function and let  $1<p<\infty$ as above, and $\sigma=w^{1-p'}$. Then there is a structural constant $A_{\mu\kappa}$ such that 
\begin{equation}\label{eq:HLLpAinfty}
  \|M\|_{L^p(w)} \leq A_{\mu\kappa} \left( \frac{1}{p-1}  [w]_{A_p}[\sigma]_{A_\infty}\right)^{1/p},
\end{equation}
\end{theorem}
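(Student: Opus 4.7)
The plan is to combine the quantitative open property from Theorem \ref{thm:Apselfimprove} with the standard weak-type Muckenhoupt bound for $M$, glued together via Marcinkiewicz interpolation against the trivial $L^\infty$ estimate. The idea is that once we know $w\in A_{p-\varepsilon}$ with an explicit $\varepsilon$ governed by $[\sigma]_{A_\infty}$, we can run the maximal operator at the easier level $q=p-\varepsilon$ and pay the interpolation price $(p-q)^{-1/p}$, which is exactly the source of the factor $[\sigma]_{A_\infty}^{1/p}/(p-1)^{1/p}$.

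First, I apply Theorem \ref{thm:Apselfimprove} to get $w\in A_q$ for $q:=p-\varepsilon$, where
\begin{equation*}
\varepsilon=\frac{p-1}{1+\tau_{\kappa\mu}[\sigma]_{A_\infty}},\qquad [w]_{A_q}\le 2^{p-1}(4\kappa)^{pD_\mu}[w]_{A_p}.
\end{equation*}
Next, I invoke the sharp weak-type bound
\begin{equation*}
\|M\|_{L^q(w)\to L^{q,\infty}(w)}\le c_{\kappa\mu}\,[w]_{A_q}^{1/q},
\end{equation*}
valid in the space of homogeneous type by the usual Vitali-covering proof of Muckenhoupt's theorem, combined with the trivial contraction $\|M\|_{L^\infty\to L^\infty}\le 1$.

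Then Marcinkiewicz interpolation, via the standard truncation argument (split $f=f_1+f_2$ at the level $\lambda/2$, use the weak-type bound on $f_1$ and the $L^\infty$ bound on $f_2$, then integrate in $\lambda$), yields
\begin{equation*}
\|Mf\|_{L^p(w)}\le 2\left(\frac{p}{p-q}\right)^{1/p}\!\bigl(c_{\kappa\mu}[w]_{A_q}^{1/q}\bigr)^{q/p}\|f\|_{L^p(w)}.
\end{equation*}
Substituting $p-q=\varepsilon$ gives $p/(p-q)=p(1+\tau_{\kappa\mu}[\sigma]_{A_\infty})/(p-1)\le (1+\tau_{\kappa\mu})\,p\,[\sigma]_{A_\infty}/(p-1)$ (using $[\sigma]_{A_\infty}\ge 1$), while $[w]_{A_q}^{q/(pq)}=[w]_{A_q}^{1/p}\le C_{\kappa\mu}[w]_{A_p}^{1/p}$ by the quantitative bound above. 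Since $p^{1/p}$ is universally bounded, collecting the constants produces a structural $A_{\mu\kappa}$ and the desired estimate
\begin{equation*}
\|M\|_{L^p(w)}\le A_{\mu\kappa}\left(\frac{[w]_{A_p}[\sigma]_{A_\infty}}{p-1}\right)^{1/p}.
\end{equation*}

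The only real obstacle is checking that the weak-type $(q,q)$ inequality for $M$ in the space of homogeneous type carries a constant of the form $c_{\kappa\mu}[w]_{A_q}^{1/q}$ (rather than something depending badly on $q$), so that the $q$-th root does not reintroduce a blow-up as $q\to 1$; this is standard but requires reviewing the covering argument to confirm the constants are purely structural. Everything else is bookkeeping, exploiting the crucial fact that the open-property constants are independent of $[\sigma]_{A_\infty}$, which is what allows all the $[\sigma]_{A_\infty}$-dependence to come out of the single factor $(p-q)^{-1/p}$.
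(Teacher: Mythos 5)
Your argument matches the paper's proof: both drop to $A_{p-\varepsilon}$ via the precise open property, establish the weak-type $(q,q)$ bound $\|M\|_{L^q(w)\to L^{q,\infty}(w)}\le (2\theta)^{D_\mu}[w]_{A_q}^{1/q}$ by a Vitali covering argument with purely structural constant, and interpolate against $L^\infty$ via truncation, producing the factor $1/\varepsilon\sim[\sigma]_{A_\infty}/(p-1)$. The paper simply writes out the Marcinkiewicz step by hand, using $f_t=f\chi_{\{f>t\}}$ and the inclusion $\{Mf>2t\}\subset\{Mf_t>t\}$, so the two proofs coincide.
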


Of curse this theorem improves Buckley's theorem:
 $$\|M\|_{L^p(w)} \leq c_{\mu\kappa} p' [w]_{A_p}^{\frac{1}{p-1}}. $$

This paper is organized as follows. In Section \ref{sec:euclidean} we present the proof of the RHI in the simplest case: $\mathbb{R}^d$ with standard cubes and Lebesgue measure. Then, in Section \ref{sec:general} we present the  proof a weak version of the RHI for spaces of homogeneous type and then we derive the applications. We choose to present the two cases separately since in the first one the main ideas appear in a very clean way. Some of those arguments cannot be extended directly to the general case, so the proofs presented in Section \ref{sec:general} require an additional effort and we are able to obtain only a weak version of the RHI. Nevertheless, it will be shown that this weak version is good enough for the proof of the open property for $A_p$ weights and for the improvement of Buckley's theorem with sharp constants. 

\section{The proof for the classical setting}\label{sec:euclidean} \label{simplecase}
In this section we present the proof of the sharp RHI for $\mathbb{R}^d$ with the euclidean metric, Lebesgue measure and $A_p$ classes defined over cubes. 

The main advantage here is that we can use maximal functions adapted to those cubes. Since we will be working with dyadic children of an arbitrary cube $Q_0$, the appropriate definition of $A_\infty$ constant is the following.

\begin{definition}\label{def:Ainfty-cubes}
For a weight defined on $\mathbb{R}^n$, we define the $A_\infty$ constant as 
\begin{equation*}
     [w]_{A_\infty}:= \sup_Q\frac{1}{w(Q)}\int_Q M(w\chi_Q )\ dx.
 \end{equation*}
where the supremum is taken over \textbf{all} cubes with edges parallel to the coordinate axes. As usual, when this supremum is finite, we say that the weight $w$ belongs to the $A_\infty$ class.
\end{definition}

We start with the following lemma. It it interesting on its own, since it can be viewed as a self-improving property of the maximal function when restricted to $A_{\infty}$ weights.

\begin{lemma}\label{lem:RHIMaximal}
Let $w$ be  any $A_\infty$ weight and let  $Q_0$ be a cube. Then  for any 
$0<\varepsilon \le\frac{1}{2^{d+1}[w]_{A_\infty}}$,
we have that
\begin{equation}\label{eq:RHIMaximal}
 \avgint_{Q_0} (Mw)^{1+\varepsilon}\ dx\le 2[w]_{A_\infty}\left(\avgint_{Q_0} w\ dx\right)^{1+\varepsilon},
\end{equation} 
where $M$ denotes the \textbf{dyadic} maximal function associated to the cube $Q_0$.
\end{lemma}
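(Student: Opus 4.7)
\textbf{Proof plan for Lemma \ref{lem:RHIMaximal}.} The plan is to write $(Mw)^{1+\varepsilon}=(Mw)^{\varepsilon}\cdot Mw$, decompose the exponential factor via a layer–cake identity, run a Calder\'on--Zygmund stopping-time argument on the outer integral, invoke the $A_\infty$ constant inside each stopping cube, and close by absorption. To make the absorption legitimate one should first assume, via an initial truncation $w_N:=\min(w,N)$, that $\int_{Q_0}(Mw)^{1+\varepsilon}\,dx<\infty$, so that the final step is rigorous, and then let $N\to\infty$.

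First I would write, with $\lambda_0:=w_{Q_0}$,
\begin{equation*}
\int_{Q_0}(Mw)^{1+\varepsilon}\,dx
=\varepsilon\int_0^\infty\lambda^{\varepsilon-1}\!\int_{\{Mw>\lambda\}\cap Q_0}Mw\,dx\,d\lambda
=I_1+I_2,
\end{equation*}
splitting at $\lambda_0$. For $I_1$ (the region $\lambda\le\lambda_0$) I use that on $Q_0$ the dyadic maximal function adapted to $Q_0$ satisfies $Mw=M(w\chi_{Q_0})$, so Definition \ref{def:Ainfty-cubes} gives $\int_{Q_0}Mw\,dx\le[w]_{A_\infty}w(Q_0)$, and hence $I_1\le\lambda_0^{\varepsilon}[w]_{A_\infty}w(Q_0)$.

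For $I_2$ I run the Calder\'on--Zygmund decomposition at each level $\lambda>\lambda_0$: since $\avgint_{Q_0}w=\lambda_0<\lambda$, the set $\{Mw>\lambda\}\cap Q_0$ is the disjoint union of the maximal dyadic subcubes $Q_j^\lambda\subseteq Q_0$ with $\avgint_{Q_j^\lambda}w>\lambda$, and the maximality forces the upper bound $\avgint_{Q_j^\lambda}w\le 2^d\lambda$. On each $Q_j^\lambda$, ancestors contribute at most $\lambda$ and $Q_j^\lambda$ itself contributes $\avgint_{Q_j^\lambda}w$, so $Mw$ restricted to $Q_j^\lambda$ coincides with the dyadic maximal function adapted to $Q_j^\lambda$, which is dominated by $M(w\chi_{Q_j^\lambda})$. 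Applying the $A_\infty$ definition cube by cube and summing,
\begin{equation*}
\int_{\{Mw>\lambda\}\cap Q_0}Mw\,dx
\le[w]_{A_\infty}\sum_j w(Q_j^\lambda)
\le 2^d\lambda\,[w]_{A_\infty}\,|\{Mw>\lambda\}\cap Q_0|.
\end{equation*}
Plugging this into $I_2$ and using the layer-cake identity in reverse,
\begin{equation*}
I_2\le 2^d\varepsilon[w]_{A_\infty}\int_{\lambda_0}^\infty\lambda^{\varepsilon}|\{Mw>\lambda\}\cap Q_0|\,d\lambda
\le\frac{2^d\varepsilon}{1+\varepsilon}[w]_{A_\infty}\int_{Q_0}(Mw)^{1+\varepsilon}\,dx.
\end{equation*}

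The assumption $\varepsilon\le\frac{1}{2^{d+1}[w]_{A_\infty}}$ makes the coefficient at most $1/2$, so the last term is absorbed into the left-hand side, yielding $\int_{Q_0}(Mw)^{1+\varepsilon}\,dx\le 2\lambda_0^\varepsilon[w]_{A_\infty}w(Q_0)$; dividing by $|Q_0|$ gives \eqref{eq:RHIMaximal}. The main obstacle, as already noted, is the a priori finiteness of $\int_{Q_0}(Mw)^{1+\varepsilon}\,dx$ needed for the absorption; this is handled by truncating $w$ by $N$, running the argument for $w_N$ with a constant independent of $N$, and passing to the limit by monotone convergence. Everything else is the standard CZ stopping-time machinery combined with the single quantitative input $\int_Q M(w\chi_Q)\,dx\le[w]_{A_\infty}w(Q)$.
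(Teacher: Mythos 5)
Your proof is correct and follows essentially the same route as the paper's: split the layer--cake integral at $\lambda_0=w_{Q_0}$, bound the low part by the $A_\infty$ constant directly, and for the high part run a Calder\'on--Zygmund stopping-time decomposition, localize the dyadic maximal function to each stopping cube, apply $\int_{Q}M(w\chi_{Q})\le[w]_{A_\infty}w(Q)$ there, use the standard bound $w(Q_j^\lambda)\le 2^d\lambda|Q_j^\lambda|$ coming from the parent cube, and absorb. The one thing you add that the paper leaves implicit is the remark that the absorption step requires the a priori finiteness of $\int_{Q_0}(Mw)^{1+\varepsilon}$, handled by truncating $w$ and passing to the limit; this is a reasonable technical precaution and does not change the argument.
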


\begin{proof}
We can assume, since all calculations will be performed on $Q_0$, that the weight $w$ is supported on that cube, that is, $w=w\chi_{Q_0}$. Define $\Omega_\lambda:=Q_0\cap\{Mw>\lambda \} $. 
We start with the following identity:

\begin{eqnarray*}
\int_{Q_0} (Mw)^{1+\varepsilon}\ dx  & = &  \int_0^\infty \varepsilon \lambda^{\varepsilon-1}Mw(\Omega_\lambda )\ d\lambda \\
& =  & \int_0^{ w_{{Q_0}}} \varepsilon \lambda^{\varepsilon-1}\int_{Q_0}Mw\ d\lambda + \int_{ w_{{Q_0}}}^\infty \varepsilon \lambda^{\varepsilon-1}Mw(\Omega_\lambda )\ d\lambda\\
\end{eqnarray*}
Now, for $\lambda\ge w_{Q_0}$, there is a family of maximal nonoverlapping dyadic cubes $\{Q_j\}_j$ for which 
\begin{equation*}
\Omega_\lambda=\bigcup_j Q_j \quad \text{ and }  \quad \avgint_{Q_j}w\ dx >\lambda.
\end{equation*}
Therefore, by using this decomposition and the definition of the  $A_\infty$ constant, we can write
\begin{equation}\label{eq:sum0}
\int_{Q_0} (Mw)^{1+\varepsilon}\ dx
 \le  w_{{Q_0}}^\varepsilon [w]_{A_\infty}w(Q_0) +\int_{w_{Q_0}}^\infty \varepsilon \lambda^{\varepsilon-1} \sum_j \int_{Q_j} Mw\ dx d\lambda.
\end{equation}
By maximality of the cubes in $\{Q_j\}_j$, it follows that the dyadic maximal function $M$ can be localized:
\begin{equation*}
 Mw(x)=M(w\chi_{Q_j})(x),
\end{equation*}
for any $x\in Q_j$, for all $j\in\mathbb{N}$. Now, if we denote by $\widetilde{Q}$ to the dyadic parent of a given cube $Q$, we have that
\begin{eqnarray*}
 \int _{Q_j}Mw\ dx =\int _{Q_j}M(w\chi_{Q_j})& \le &
[w]_{A_\infty}w(Q_j) \le [w]_{A_\infty}w(\widetilde{Q_j}) \\
& = & [w]_{A_\infty}w_{\widetilde{Q_j}} |\widetilde{Q_j}|\\
& \le & [w]_{A_\infty}\lambda 2^d|Q_j|
\end{eqnarray*}
Therefore, 
\begin{equation*}
\sum_j \int _{Q_j}Mw\ dx \le
\sum_j [w]_{A_\infty}\lambda 2^d|Q_j|\le [w]_{A_\infty}\lambda 2^d|\Omega_\lambda|,
\end{equation*}
and then \eqref{eq:sum0} becomes
\begin{equation*}
 \int_{Q_0} (Mw)^{1+\varepsilon}\ dx
 \le  w_{{Q_0}}^\varepsilon [w]_{A_\infty}w(Q_0) +\varepsilon[w]_{A_\infty}2^d\int_{w_{Q_0}}^\infty \lambda^{\varepsilon}|\Omega_\lambda|d\lambda.
\end{equation*}
Averaging over $Q_0$, we obtain that
\begin{equation*}
\avgint_{Q_0}(Mw)^{1+\varepsilon}\ dx \le   w_{Q}^{1+\varepsilon} [w]_{A_\infty} +
\frac{\varepsilon 2^d[w]_{A_\infty}}{1+\varepsilon}\avgint_{Q_0} (Mw)^{1+\varepsilon}\ dx.
\end{equation*}
To conclude with the proof, we can obtain the desired inequality for any $0<\varepsilon\le \frac{1}{2^{d+1}[w]_{A_\infty}}$ by absorbing the last term into the left.
\end{proof}
We now have the following theorem. 

We remark that in this standard case, we can recover (and improve) on the known sharp RHI, with no dilations involved.

\begin{theorem}[Sharp Reverse H\"older Inequality]\label{thm:SharpRHI-euclidean}
Let $w\in A_\infty$ and let $Q_0$ be a cube. Then
\begin{equation*}
\avgint_{Q_0} w^{1+\varepsilon}\ dx \le 2\left(\avgint_{Q_0} w \ dx\right)^{1+\varepsilon},
\end{equation*}
for any $\varepsilon>0$  such that $0<\varepsilon \le\frac{1}{2^{d+1}[w]_{A_\infty}-1 }$.  
\end{theorem}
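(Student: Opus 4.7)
The plan is to run the same layer-cake/Calder\'on-Zygmund argument that drives the proof of Lemma~\ref{lem:RHIMaximal}, but applied directly to $w$ itself, and then reduce the main term to the maximal-function bound already established in that lemma. As in the lemma, one first reduces to the case $w=w\chi_{Q_0}$, and $M$ denotes the dyadic maximal operator associated with $Q_0$.

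Starting from the identity
\begin{equation*}
\int_{Q_0} w^{1+\varepsilon}\,dx=\varepsilon\int_0^\infty \lambda^{\varepsilon-1}\,w(\{x\in Q_0:w(x)>\lambda\})\,d\lambda,
\end{equation*}
I would split the outer integral at the threshold $\lambda=w_{Q_0}$. The low-$\lambda$ part is handled trivially by $w(\{w>\lambda\})\le w(Q_0)$, contributing exactly $w_{Q_0}^{1+\varepsilon}|Q_0|$. For the high-$\lambda$ part, perform a Calder\'on-Zygmund stopping on $w$ at height $\lambda\ge w_{Q_0}$ to obtain the family $\{Q_j\}$ of maximal dyadic subcubes of $Q_0$ with $w_{Q_j}>\lambda$; by Lebesgue differentiation, $\{w>\lambda\}\cap Q_0\subset\bigcup_j Q_j=\{Mw>\lambda\}$, and the maximality of $Q_j$ forces its dyadic parent $\widetilde{Q_j}$ to satisfy $w_{\widetilde{Q_j}}\le\lambda$, whence $w(Q_j)\le w(\widetilde{Q_j})\le 2^d\lambda|Q_j|$. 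Summing over $j$ yields the pointwise (in $\lambda$) estimate $w(\{w>\lambda\})\le 2^d\lambda|\{Mw>\lambda\}|$.

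Plugging this into the high part and re-recognising the layer cake for $(Mw)^{1+\varepsilon}$,
\begin{equation*}
\varepsilon\int_{w_{Q_0}}^\infty \lambda^{\varepsilon-1}w(\{w>\lambda\})\,d\lambda\le 2^d\varepsilon\int_0^\infty \lambda^{\varepsilon}|\{Mw>\lambda\}|\,d\lambda=\frac{2^d\varepsilon}{1+\varepsilon}\int_{Q_0}(Mw)^{1+\varepsilon}\,dx,
\end{equation*}
and Lemma~\ref{lem:RHIMaximal} bounds the last integral by $2[w]_{A_\infty}|Q_0|w_{Q_0}^{1+\varepsilon}$. Combining with the low-part contribution gives
\begin{equation*}
\avgint_{Q_0}w^{1+\varepsilon}\,dx\le w_{Q_0}^{1+\varepsilon}\left(1+\frac{\varepsilon\,2^{d+1}[w]_{A_\infty}}{1+\varepsilon}\right),
\end{equation*}
and a routine check shows the bracketed factor is at most $2$ exactly when $\varepsilon\bigl(2^{d+1}[w]_{A_\infty}-1\bigr)\le 1$, which is the range asserted.

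The one step that requires care is the Calder\'on-Zygmund bookkeeping: making sure the stopping is carried out only at levels $\lambda\ge w_{Q_0}$ (so that a genuine family $\{Q_j\}$ of proper subcubes exists), that the maximal operator appearing is the same dyadic one used in Lemma~\ref{lem:RHIMaximal}, and that the parent-to-child inflation factor $2^d$ is tracked faithfully — this is precisely what produces the $2^{d+1}$ (and, correspondingly, the $-1$) in the final admissible range of $\varepsilon$.
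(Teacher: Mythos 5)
Your proof is correct and follows essentially the same route as the paper: a layer-cake decomposition split at $w_{Q_0}$, a Calder\'on--Zygmund stopping-time family with the parent-cube bound $w(Q_j)\le 2^d\lambda|Q_j|$, and then Lemma~\ref{lem:RHIMaximal} to control $\int_{Q_0}(Mw)^{1+\varepsilon}\,dx$. The only (cosmetic) difference is that the paper first bounds $w^{\varepsilon}\le (Mw)^{\varepsilon}$ pointwise and then applies the layer cake to $\int (Mw)^\varepsilon w$, whereas you apply the layer cake to $\int w^{1+\varepsilon}$ directly and use the inclusion $\{w>\lambda\}\subset\{Mw>\lambda\}$ at the summing step; both hinge on Lebesgue differentiation and yield the same final estimate.
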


Before we proceed with the proof, we remark here that the parameter $\frac{1}{2^{d+1}[w]_{A_\infty}-1 }$ is better than the one obtained in \cite{HP}.

\begin{proof}
 We assume again that $w=w\chi_{Q_0}$. We clearly have that
 \begin{equation*}
  \int_{Q_0}w^{1+\varepsilon}\ dx\leq  \int_{Q_0}(M w)^{\varepsilon}\ wdx.
 \end{equation*}
Now we argue in a similar way as in the previous lemma to obtain that
\begin{eqnarray*}
\int_{Q_0} (Mw)^{\varepsilon}\ wdx & =  & \int_0^\infty \varepsilon \lambda^{\varepsilon-1}w(\Omega_\lambda )\ d\lambda \\
&=& \int_0^{w_{Q_0}} \varepsilon \lambda^{\varepsilon-1}w(Q_0)\ d\lambda
+ \int_{w_{Q_0}}^\infty \varepsilon \lambda^{\varepsilon-1}w(\Omega_\lambda)\ d\lambda\\
&\le & w_{Q_0}^\varepsilon w(Q_0) + \int_{w_{Q_0}}^\infty \varepsilon \lambda^{\varepsilon-1} \sum_j w(Q_j)\ d\lambda,
\end{eqnarray*}
where the cubes $\{Q_j\}_j$ are from the decomposition of $\Omega_\lambda$ above. Therefore, 
\begin{eqnarray*}
\int_{Q_0} (Mw)^{\varepsilon}\ wdx  &\le & w_{Q_0}^\varepsilon w(Q_0) + \varepsilon 2^d\int_{w_{Q_0}}^\infty \lambda^{\varepsilon} \sum_j |Q_j|\ d\lambda\\
 &\le &w_{Q_0}^\varepsilon w(Q_0) + \varepsilon 2^d\int_{w_{Q_0}}^\infty \lambda^{\varepsilon} |\Omega_\lambda|\ d\lambda\\
 &\le &w_{Q_0}^\varepsilon w(Q_0) +  \frac{\varepsilon 2^d}{1+\varepsilon}\int_{Q_0} (Mw)^{1+\varepsilon}\ dx.
\end{eqnarray*}
Averaging over $Q_0$ we obtain
\begin{equation*}
\avgint_{Q_0}w^{1+\varepsilon}\ dx \le  w_{Q_0}^{1+\varepsilon}+ \frac{\varepsilon 2^d}{1+\varepsilon}\avgint_{Q_0} (Mw)^{1+\varepsilon}\ dx.
\end{equation*}

Now we use Lemma \ref{lem:RHIMaximal} to conclude with the proof:
\begin{eqnarray*}
\avgint_{Q_0} w^{1+\varepsilon}\ dx & \le &  w_{Q_0}^{1+\varepsilon}+ \frac{\varepsilon 2^d}{1+\varepsilon}
\avgint_{Q_0} (Mw)^{1+\varepsilon}\ dx\\
&\le & w_{Q_0}^{1+\varepsilon}+ 
\frac{\varepsilon 2^{d+1}[w]_{A_\infty}}{1+\varepsilon}\left(\avgint_{Q_0} w\ dx\right)^{1+\varepsilon}\\
&\le & 2\left(\avgint_{Q_0} w\ dx\right)^{1+\varepsilon}
\end{eqnarray*}
since, by hypothesis, $\frac{\varepsilon 2^d[w]_{A_\infty}}{1+\varepsilon}\le \frac{1}{2}$.
\end{proof}

\section{Proofs for the general case of spaces of homogeneous type}\label{sec:general}

Before we proceed with the proofs of our results, we need to introduce a local version of a Calder\'on-Zygmund lemma valid for spaces of homogeneous type from \cite{MacManus-Perez-98}. We need some notation first.

\begin{definition}
Let $B_0$ be a ball and let $\delta>0$ be fixed. We use the notation $\widehat{B_0} = (1+\delta)\kappa B_0$, where $\kappa$ is the quasimetric constant of $d$.  We also define the following family.
\begin{equation}\label{eq:B0}
{\mathcal B} = {\mathcal B}_{B_0,\delta} = \{B: x_B \in B_0
\ \mbox{ and }\ r(B)\le \delta  r(B_0)\}. 
\end{equation}
\end{definition}

Given an integrable function $f$ on $\widehat{B_0}$, the maximal function of $f$ associated to ${\mathcal B}$ is defined by
\begin{equation}\label{eq:maximallocal}
M_{ {\mathcal B}}f(x)= \sup_{B: x\in B\in {\mathcal B}} \avgint_B |f| d\mu
\end{equation}
if $x$ belongs to an element of the basis ${\mathcal B}$, and $M_{ {\mathcal B}}f(x)= 0$ otherwise.  We remark here that with this definition, the maximal operator depends on the reference ball $B_0$ and, in addition, on the parameter $\delta$, which can be any positive number.

Therefore, if $\lambda>0$ and we define
\begin{equation}\label{eq:omegalambda}
\Omega_\lambda = \{x \in \mathcal S:  M_{\mathcal B}f(x) >
\lambda\},
\end{equation} 
then 
\begin{equation}\label{eq:OmegaLambdaSUBwidehatB}
 \Omega_\lambda \subset \widehat{B_0}.
\end{equation}

An important property of the family $\mathcal B$ is expressed in the following lemma. The proof follows easily from \eqref{eq:doublingEXP}.
\begin{lemma}
Let $B_0$ be  any ball and let $\mathcal B$ be defined as in \eqref{eq:B0}. Let $f$ be a locally integrable function on $\widehat{B_0}$. Then,
\begin{enumerate}
 \item Any ball $B\in\mathcal{B}$ is contained in $\widehat{B_0}$.
\item 
If $\lambda < f_B$, then 
\begin{equation*}
r(B)\le 2\kappa^2(1+\delta)\left(\frac{f_{\widehat{B_0}}}{\lambda}\right)^{1/D_\mu} r(B_0)
\end{equation*}
It follows then that, for any $N>0$,
\begin{equation*}
 r(B)\le \frac{\delta}{N}r(B_0)
\end{equation*} 
whenever  $f_B >\lambda$ and $\lambda\ge \left(\frac{2\kappa^2(1+\delta)N}{\delta}\right)^{D_\mu}f_{\widehat{B_0}}$.
\end{enumerate}
\end{lemma}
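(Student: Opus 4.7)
The plan is to derive both parts from two ingredients: the quasi-triangle inequality (to compare the centers $x_B$ and $x_{B_0}$) and the doubling comparison \eqref{eq:doublingEXP}, which is how one controls measure ratios between balls of different radii.

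For part~(1), I would fix $B\in\mathcal B$ and, for any $y\in B$, estimate
\begin{equation*}
d(y,x_{B_0})\le\kappa\bigl(d(y,x_B)+d(x_B,x_{B_0})\bigr)<\kappa\bigl(r(B)+r(B_0)\bigr)\le\kappa(1+\delta)r(B_0),
\end{equation*}
which places $y$ in $\widehat{B_0}$. This is a one-line argument.

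For the first inequality in part~(2), the starting point is that by~(1), $B\subset\widehat{B_0}$, and hence
\begin{equation*}
f_B\le\frac{\mu(\widehat{B_0})}{\mu(B)}\,f_{\widehat{B_0}}.
\end{equation*}
The substantive step is to bound the ratio $\mu(\widehat{B_0})/\mu(B)$ by a power of the ratio of the radii. For this I would use the quasi-triangle inequality in the other direction to enclose $\widehat{B_0}$ inside a ball $\widetilde B$ concentric with $B$: for any $y\in\widehat{B_0}$,
\begin{equation*}
d(y,x_B)\le\kappa\bigl(d(y,x_{B_0})+d(x_{B_0},x_B)\bigr)<\kappa\bigl(\kappa(1+\delta)+1\bigr)r(B_0)\le 2\kappa^2(1+\delta)\,r(B_0),
\end{equation*}
so $\widehat{B_0}\subset\widetilde B:=B\bigl(x_B,\,2\kappa^2(1+\delta)r(B_0)\bigr)$, which is a \emph{concentric} dilation of $B$ by the factor $t:=2\kappa^2(1+\delta)\,r(B_0)/r(B)$. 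The concentric doubling estimate~\eqref{eq:doublingDIL} (or equivalently~\eqref{eq:doublingEXP}) then gives $\mu(\widehat{B_0})/\mu(B)\le\mu(\widetilde B)/\mu(B)\le(2t)^{D_\mu}$, up to the structural factor. Substituting into $\lambda<f_B$ and solving for $r(B)$ produces the advertised bound.

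The second inequality in part~(2) is the algebraic inversion of the first: asking for $r(B)\le(\delta/N)r(B_0)$ is equivalent to demanding $\lambda\ge\bigl(2\kappa^2(1+\delta)N/\delta\bigr)^{D_\mu}f_{\widehat{B_0}}$, which is precisely the stated hypothesis. No genuine obstacle is expected; the only thing demanding care is the bookkeeping of the geometric constants through the two applications of the quasi-triangle inequality combined with the doubling step, in particular tracking how one factor of $\kappa$ arises from each triangle inequality and together yields the $\kappa^2$ in $2\kappa^2(1+\delta)$.
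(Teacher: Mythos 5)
Your argument is the right one, and it matches the one-line indication the paper gives (``the proof follows easily from \eqref{eq:doublingEXP}''): use the quasi-triangle inequality twice (once to get $B\subset\widehat{B_0}$, once to enclose $\widehat{B_0}$ in a ball concentric with $B$), then apply the doubling comparison, and invert. Part (1) and the algebraic inversion at the end of part (2) are both correct.

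The only thing worth flagging is the constant in the first inequality of part (2). With your chain $\widehat{B_0}\subset\widetilde B=B\bigl(x_B,\,2\kappa^2(1+\delta)r(B_0)\bigr)$ and $t=2\kappa^2(1+\delta)r(B_0)/r(B)$, the bound \eqref{eq:doublingDIL} gives $\mu(\widehat{B_0})/\mu(B)\le(2t)^{D_\mu}$, and substituting into $\lambda< f_B\le\frac{\mu(\widehat{B_0})}{\mu(B)}f_{\widehat{B_0}}$ yields
\begin{equation*}
r(B) \le 4\kappa^2(1+\delta)\left(\frac{f_{\widehat{B_0}}}{\lambda}\right)^{1/D_\mu}r(B_0),
\end{equation*}
with a $4\kappa^2$, not the paper's $2\kappa^2$. (The same factor $4\kappa^2$ appears if one instead applies \eqref{eq:doublingEXP} directly to the nested pair $B\subset\widehat{B_0}$, since $r(\widehat{B_0})=\kappa(1+\delta)r(B_0)$ and the front factor is $(4\kappa)^{D_\mu}$.) You acknowledge this when you write ``up to the structural factor,'' but then claim the ``advertised bound''; you should either carry the $4\kappa^2$ explicitly or note that the discrepancy is harmless — it merely changes the threshold in part (2) to $\lambda\ge\bigl(4\kappa^2(1+\delta)N/\delta\bigr)^{D_\mu}f_{\widehat{B_0}}$, and downstream these structural constants are only used inside $\tau_{\mu\kappa\delta}$ with generous slack. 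One small terminological point: \eqref{eq:doublingDIL} and \eqref{eq:doublingEXP} are not ``equivalent'' — the former is the concentric case, the latter the general nested case with an extra $(4\kappa)^{D_\mu}$ — but both lead to the same kind of bound here.
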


We now present the local Calder\'on--Zygmund covering lemma in this context which is from \cite{MacManus-Perez-98}.
 
\begin{lemma}[Calder\'on--Zygmund decomposition]
\label{lem:localstoppingtime} Let $\delta>0$ and let $f$ be a nonnegative and integrable function on $\widehat{B_0}=(1+\delta)\kappa B_0$. For $N>0$ and  
$\lambda\ge \left(\frac{2\kappa^2(1+\delta)N}{\delta}\right)^{D_\mu}f_{\widehat{B_0}}$, 
define the set $\Omega_{\lambda}$ as in \eqref{eq:omegalambda}
If $\Omega_{\lambda}$ is not empty, then there exists a countable family
$\{B_i\}$ of pairwise disjoint balls such that
\begin{itemize}
\item[i)] $\displaystyle \cup_{i} B_{i}\subset \Omega_{\lambda} \subset
\cup_{i} B_{i}^{*}$,  where $B^{*}=(4\kappa^2+\kappa)B$.
\item[ii)]  $r(B_{i}) \le \frac{\delta}{N}r(B_0)$ for all $i$,
\item[iii)] For all $i$, 
\begin{equation*}
\lambda < \avgint_{B_i} f d\mu.
\end{equation*}
\item[iv)] 
If $\eta B_i \in {\mathcal B}$ and $\eta\ge 2$, then $f_{\eta B_i}\le \lambda$.
\end{itemize}
\end{lemma}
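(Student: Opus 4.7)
The plan is to combine a pointwise stopping-time selection with a Vitali-type covering in the quasimetric setting. The hypothesis on $\lambda$, fed through the preceding lemma, will uniformly bound the radii of the candidate balls; a near-maximal-radius choice will build the stopping condition (iv) into those candidates; and a standard Vitali extraction will then produce the disjoint subfamily and the $B_i^*$-covering.

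First, for each $x\in\Omega_\lambda$ the definition of $M_{\mathcal B}f(x)>\lambda$ makes $\mathcal F_x:=\{B\in\mathcal B: x\in B,\ \avgint_B f\,d\mu>\lambda\}$ nonempty, so I set $r_x:=\sup\{r(B):B\in\mathcal F_x\}$ and pick $B^x\in\mathcal F_x$ with $r(B^x)>r_x/2$. Combining the hypothesis on $\lambda$ with part (2) of the preceding lemma forces $r(B^x)\le\frac{\delta}{N}r(B_0)$, giving a uniform radius bound across all candidate balls.

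Next, I apply a quasimetric Vitali procedure to $\{B^x\}_{x\in\Omega_\lambda}$: starting from a ball of near-maximal radius and iteratively choosing balls of near-maximal radius among those disjoint from previously selected ones, I obtain a pairwise disjoint subfamily $\{B_i\}$ such that every $B^x$ meets some $B_i$ with $r(B^x)\le 2r(B_i)$. Two applications of the $\kappa$-quasi-triangle inequality (through a point of intersection and then through the centre of $B_i$) then show that such a $B^x$ lies in $(4\kappa^2+\kappa)B_i=B_i^*$, so $\Omega_\lambda\subset\bigcup_{x} B^x\subset\bigcup_i B_i^*$. Conversely, $\bigcup_i B_i\subset\Omega_\lambda$, since every $y\in B_i$ satisfies $M_{\mathcal B}f(y)\ge\avgint_{B_i}f\,d\mu>\lambda$. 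This proves (i), and (ii)--(iii) are immediate from the selection.

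For (iv), the near-maximal choice pays off: if $\eta\ge 2$ and $\eta B_i\in\mathcal B$, where $B_i=B^{x_i}$, then $r(\eta B_i)=\eta\,r(B^{x_i})>2\cdot r_{x_i}/2=r_{x_i}$, while $x_i\in B_i\subset\eta B_i$. Thus $\eta B_i$ is a ball in $\mathcal B$ containing $x_i$ whose radius strictly exceeds the supremum defining $r_{x_i}$; therefore $\eta B_i\notin\mathcal F_{x_i}$, i.e., $\avgint_{\eta B_i}f\,d\mu\le\lambda$. The main obstacle is precisely property (iv): a naive Vitali extraction controls disjointness and radii but says nothing about dilates. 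The resolution is to split the proof into two stages, embedding the stopping condition into the pointwise candidate $B^x$ via the strict factor-$2$ near-supremum, so that Vitali pruning preserves it automatically, since each $B_i$ is itself one of the $B^x$.
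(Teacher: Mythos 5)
The paper does not reproduce a proof of this lemma (it cites \cite{MacManus-Perez-98}), but your argument is the standard one for such local Calder\'on--Zygmund decompositions in quasimetric spaces and is correct: the radius bound from the preceding lemma, the near-maximal choice of $B^x$ within $\mathcal{F}_x$ with a strict factor-$2$ margin (which is precisely what secures (iv) for $\eta\ge 2$), and the Vitali extraction producing the $(4\kappa^2+\kappa)$ dilation in (i). This is essentially the same route as the cited source, and the computation $d(y,x_{B_i})\le\kappa(2\kappa r(B^x)+r(B_i))\le(4\kappa^2+\kappa)r(B_i)$ confirms the stated dilation constant.
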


The next lemma contains a localization argument for the maximal function, which is  a key ingredient in the proof. For the dyadic case in $\mathbb{R}^d$, this was a direct consequence of the maximality of the cubes in the Calder\'on-Zygmund decomposition. In the general setting of spaces of homogeneous type we have the following substitute. We borrow the idea from \cite[Lemma 4.4]{MacManus-Perez-98}. 

\begin{lemma}\label{lem:localizedMaximal}
Consider, for a fixed $\lambda$ as in the previous lemma, the Calder\'on-Zygmund decomposition of the set $\Omega_\lambda$ with $N\ge 2\kappa$. Define $L=(8\kappa^2)^{D_\mu}$. Then, for any ball $B_i$ and any $x\in B_i^*\cap \Omega_{L\lambda}$, we have that
\begin{equation}
 M_\mathcal{B}f(x)\le M_\mathcal{B}(f\chi_{B_i^{**}})(x),
\end{equation} 
where $B^{**}=(B^*)^*=(4\kappa^2+\kappa)^2B$.
\end{lemma}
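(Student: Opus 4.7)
The plan is to establish the following dichotomy: for every $B\in\mathcal B$ with $x\in B$, either $B\subset B_i^{**}$, in which case $f\chi_{B_i^{**}}=f$ on $B$, or else $\avgint_B f\,d\mu\le L\lambda$. Granted this, we would conclude as follows. In the first alternative, $\avgint_B f\,d\mu=\avgint_B f\chi_{B_i^{**}}\,d\mu\le M_{\mathcal B}(f\chi_{B_i^{**}})(x)$. For the second, the assumption $x\in\Omega_{L\lambda}$ supplies a ball $B^{\sharp}\in\mathcal B$ containing $x$ with $\avgint_{B^\sharp} f\,d\mu>L\lambda$; such a $B^\sharp$ cannot satisfy the second alternative and therefore lies in $B_i^{**}$, from which $M_{\mathcal B}(f\chi_{B_i^{**}})(x)\ge \avgint_{B^\sharp} f\,d\mu>L\lambda\ge \avgint_B f\,d\mu$. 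In either situation $\avgint_B f\,d\mu\le M_{\mathcal B}(f\chi_{B_i^{**}})(x)$, and taking the supremum over $B\ni x$ finishes the proof.

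To prove the dichotomy, fix $B\in\mathcal B$ with $x\in B\cap B_i^*$ and $B\not\subset B_i^{**}$; the task is to show $f_B\le L\lambda$. Two applications of the quasi-triangle inequality, using $d(y,x_B)<r(B)$, $d(x,x_B)<r(B)$ and $d(x,x_{B_i})<r(B_i^*)=(4\kappa^2+\kappa)r(B_i)$, give for every $y\in B$
\begin{equation*}
d(y,x_{B_i})\le 2\kappa^2 r(B)+\kappa(4\kappa^2+\kappa)r(B_i).
\end{equation*}
Since $B\not\subset B_i^{**}=B(x_{B_i},(4\kappa^2+\kappa)^2 r(B_i))$, some $y\in B$ violates the inclusion, and rearranging the inequality forces the quantitative size comparison
\begin{equation*}
r(B)\ge 2(4\kappa^2+\kappa)\,r(B_i),
\end{equation*}
so that $B$ must be substantially larger than $B_i$.

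Next I would insert an auxiliary concentric dilation $\tilde B=\eta B_i$, chosen with $R:=\eta r(B_i)=2\kappa^2 r(B)+\kappa(4\kappa^2+\kappa) r(B_i)$, so that $B\subset \tilde B$ by the inequality above. The size comparison gives $R\le (2\kappa^2+\kappa/2)r(B)$ and $\eta\ge 2(4\kappa^2+\kappa)\ge 2$. If $\tilde B\in\mathcal B$, the Calder\'on--Zygmund property (iv) of Lemma \ref{lem:localstoppingtime} yields $\avgint_{\tilde B} f\,d\mu\le \lambda$; therefore
\begin{equation*}
\avgint_B f\,d\mu\le \frac{\mu(\tilde B)}{\mu(B)}\,\lambda,
\end{equation*}
and the doubling inequality \eqref{eq:doublingEXP} applied to $B\subset \tilde B$, with a smallest-concentric-enclosing-ball reduction to absorb the non-concentricity of $\tilde B$ relative to $B$, bounds $\mu(\tilde B)/\mu(B)$ by $L=(8\kappa^2)^{D_\mu}$. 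This closes the dichotomy.

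The main obstacle is verifying that $\tilde B$ truly belongs to $\mathcal B$, i.e., that $R\le \delta r(B_0)$, since $r(B)$ alone may already exhaust $\delta r(B_0)$. This is exactly where the hypothesis $N\ge 2\kappa$ of Lemma \ref{lem:localstoppingtime} is crucial: it supplies $r(B_i)\le \frac{\delta}{2\kappa}r(B_0)$, leaving enough room to accommodate the dilation $\eta$ without leaving the family $\mathcal B$. Tracking the various factors of $\kappa$, the dilations $(4\kappa^2+\kappa)$ and $(4\kappa^2+\kappa)^2$, and the doubling exponent $D_\mu$ so as to land precisely on the stated value $L=(8\kappa^2)^{D_\mu}$ is the most delicate bookkeeping in the argument.
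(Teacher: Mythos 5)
Your overall strategy matches the paper's: reduce to the dichotomy ``$f_B>L\lambda$ forces $B\subset B_i^{**}$'', prove it by embedding $B$ in a concentric dilate $\eta B_i\in\mathcal B$ with $\eta\ge 2$, and invoke property (iv) of Lemma \ref{lem:localstoppingtime} together with doubling. Your opening reduction (observing that if $f_B\le L\lambda$ then the witness ball $B^\sharp$ from $x\in\Omega_{L\lambda}$ already does the job) is actually spelled out more carefully than in the paper, which only treats the single witness ball and leaves the rest implicit.

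However, the argument has a genuine gap where you try to certify $\tilde B\in\mathcal B$. The quantity $R=2\kappa^2 r(B)+\kappa(4\kappa^2+\kappa)r(B_i)$ is dominated by the term $2\kappa^2 r(B)$, and a bound on $r(B_i)$ alone (such as $r(B_i)\le \tfrac{\delta}{2\kappa}r(B_0)$ from $N\ge 2\kappa$) does nothing to control $r(B)$; since a generic $B\in\mathcal B$ can have radius as large as $\delta r(B_0)$, you would get $R$ up to $2\kappa^2\delta r(B_0)$, which already exceeds $\delta r(B_0)$. The missing ingredient is the elementary lemma that precedes Lemma \ref{lem:localstoppingtime}: since $\lambda\ge\left(\tfrac{2\kappa^2(1+\delta)N}{\delta}\right)^{D_\mu}f_{\widehat{B_0}}$, \emph{any} ball $B\in\mathcal B$ with $f_B>\lambda$ (in particular the $B$ under scrutiny, for which $f_B>L\lambda\ge\lambda$) automatically satisfies $r(B)\le\tfrac{\delta}{N}r(B_0)$. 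This is what the paper uses to conclude $r(\tilde B)\le\tfrac{2\kappa\delta}{N}r(B_0)\le\delta r(B_0)$. You should prove the dichotomy in its contrapositive form (assume $f_B>L\lambda$, conclude $B\subset B_i^{**}$), so that this radius bound is available; otherwise the case analysis cannot close. Even after this fix, note that your choice of $\tilde B$ has $r(\tilde B)/r(B)\le 2\kappa^2+\kappa/2$, so \eqref{eq:doublingEXP} gives $\mu(\tilde B)/\mu(B)\le\bigl(4\kappa(2\kappa^2+\kappa/2)\bigr)^{D_\mu}=(8\kappa^3+2\kappa^2)^{D_\mu}$, which strictly exceeds the stated $L=(8\kappa^2)^{D_\mu}$ for every $\kappa\ge1$, and $N\ge 2\kappa$ would likewise need to be increased to accommodate $R\le(2\kappa^2+\kappa/2)r(B)$. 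You flagged the bookkeeping as ``delicate'', which is fair, but as written neither the membership $\tilde B\in\mathcal B$ nor the constant $L$ is actually verified.
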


\begin{proof}
Let $x\in B_i^*\cap\Omega_{L\lambda}$. Then there exists a ball $B\in\mathcal{B}$ containing $x$ such that 
\begin{equation*}
L\lambda < \avgint_B |f|\ d\mu.
\end{equation*}
Now we claim that $r(B)\le r(B_i^*)$. Suppose not, then $r(B)>r(B_i^*)$ and therefore the ball $B$ is contained in $\tilde{B}:=B(x_i;2\kappa r(B))$. By the doubling property of $\mu$ from \eqref{eq:doublingEXP}, we have that 
\begin{equation*}
\avgint_B |f|\ d\mu\le \frac{\mu(\tilde B)}{\mu(B)}\avgint_{\tilde B} |f|\ d\mu\le L\avgint_{\tilde B} |f|\ d\mu.
\end{equation*}
The ball $\tilde B$ clearly belongs to $\mathcal{B}$, since 
$$
r(\tilde{B})=2\kappa r(B)\le \frac{2\kappa\delta}{N}r(B_0)\le \delta r(B_0). 
$$
In addition, under the hypothesis that $r(B)>r(B_i^*)$, we have that $\tilde{B}=2\kappa \frac{r(B)}{r(B_i)}B_i=\eta B_i$ with $\eta>2$. Therefore, property iv) of Lemma \ref{lem:localstoppingtime} implies that $|f|_{\tilde{B}}\le\lambda$. This implies that $|f|_B\le L\lambda$, which is a contradiction. Then the claim is true and $r(B)\le r(B_i^*)$. It is clear that in this case the ball $B$ is contained in $B_i^{**}$ and then 
$$
\avgint_B |f|\ d\mu\le\avgint_B |f|\chi_{B_i^{**}}\ d\mu\le M_\mathcal{B}(f\chi_{B_i^{**}})(x).
$$

\end{proof}

Now we present the proof of the Reverse H\"older inequality and its applications. We start with a preliminary lemma, which is the generalization of Lemma \ref{lem:RHIMaximal}.

\begin{lemma}\label{lem:generalRHMaximal}
Let $w$ be  any $A_\infty$ weight. Then there is a structural constant $\tau_{\mu\kappa}$ such that, for any 
$0<\varepsilon \le\frac{1}{\tau_{\mu\kappa\delta}[w]_{A_\infty}}$,
we have that
\begin{equation}\label{eq:generalRHMaximal}
 \avgint_{\widehat{B}} (M_\mathcal{B}w)^{1+\varepsilon}\ d\mu\le 3[w]_{A_\infty}\left(\avgint_{\widehat B} w\ d\mu\right)^{1+\varepsilon}.
\end{equation} 
The constant $\tau_{\mu\kappa\delta}$ can be taken as $\tau_{\mu\kappa\delta}= 6(16\kappa^2(4\kappa^2+\kappa)^2(1+\frac{1}{\delta}))^{D_\mu}$, where $\kappa$ is the quasimetric constant and $D_\mu$ is the doubling order of the measure from \eqref{eq:doublingEXP}.
\end{lemma}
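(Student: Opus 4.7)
The plan is to mimic the proof of Lemma \ref{lem:RHIMaximal} in the homogeneous-type setting, using the local Calder\'on--Zygmund decomposition (Lemma \ref{lem:localstoppingtime}) and the localization principle (Lemma \ref{lem:localizedMaximal}) as substitutes for the nesting and maximality properties of dyadic cubes. Since $M_\mathcal{B} w$ is supported in $\widehat{B_0}$, the layer cake identity gives
\begin{equation*}
\int_{\widehat{B_0}} (M_\mathcal{B} w)^{1+\varepsilon}\, d\mu
= \int_0^\infty \varepsilon \lambda^{\varepsilon-1} \int_{\Omega_\lambda} M_\mathcal{B} w\, d\mu\, d\lambda.
\end{equation*}
I would fix $N=(4\kappa^2+\kappa)^2$ in Lemma \ref{lem:localstoppingtime}, a choice that is simultaneously $\geq 2\kappa$ (as required by Lemma \ref{lem:localizedMaximal}) and large enough that $B_i^{**}\in\mathcal{B}$. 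The outer integral will then be split at the threshold $\lambda_0:=C_2\, w_{\widehat{B_0}}$, where $C_2=\bigl(\tfrac{2\kappa^2(1+\delta)N}{\delta}\bigr)^{D_\mu}$ is exactly the minimum level at which Lemma \ref{lem:localstoppingtime} applies.

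For $\lambda\leq\lambda_0$, the pointwise bound $M_\mathcal{B} w\leq M(w\chi_{\widehat{B_0}})$ together with the $A_\infty$ definition applied to the ball $\widehat{B_0}$ controls this piece by $\lambda_0^\varepsilon [w]_{A_\infty} w(\widehat{B_0})$. For $\lambda\geq\lambda_0$, Lemma \ref{lem:localstoppingtime} produces disjoint balls $\{B_i\}$ with $\Omega_\lambda\subset\bigcup_i B_i^*$, and I split $\Omega_\lambda=(\Omega_\lambda\setminus\Omega_{L\lambda})\cup\Omega_{L\lambda}$ with $L=(8\kappa^2)^{D_\mu}$. On the first piece one has the trivial pointwise bound $M_\mathcal{B} w\leq L\lambda$, hence a contribution $\leq L\lambda\,\mu(\Omega_\lambda)$. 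On the second piece, Lemma \ref{lem:localizedMaximal} gives $M_\mathcal{B} w\leq M_\mathcal{B}(w\chi_{B_i^{**}})\leq M(w\chi_{B_i^{**}})$ on $B_i^*\cap\Omega_{L\lambda}$, so integrating and using the $A_\infty$ definition on each $B_i^{**}$ yields
\begin{equation*}
\int_{\Omega_{L\lambda}} M_\mathcal{B} w\, d\mu
\leq \sum_i\int_{B_i^{**}} M(w\chi_{B_i^{**}})\, d\mu
\leq [w]_{A_\infty}\sum_i w(B_i^{**}).
\end{equation*}
Since $B_i^{**}=(4\kappa^2+\kappa)^2 B_i\in\mathcal{B}$ by the choice of $N$ and the dilation coefficient is $\geq 2$, property (iv) of the CZ decomposition gives $w_{B_i^{**}}\leq \lambda$; combined with \eqref{eq:doublingDIL} and the disjointness of $\{B_i\}$ this yields $\sum_i w(B_i^{**})\leq (2(4\kappa^2+\kappa)^2)^{D_\mu}\lambda\,\mu(\Omega_\lambda)$.

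Assembling the two pieces of the high range produces $\int_{\Omega_\lambda}M_\mathcal{B} w\,d\mu \leq [w]_{A_\infty} C\,\lambda\,\mu(\Omega_\lambda)$ with a structural constant $C$ bounded by $2(16\kappa^2(4\kappa^2+\kappa)^2)^{D_\mu}$ (using $[w]_{A_\infty}\geq 1$ to absorb $L$). Reinserting into the layer cake and noting $\int_{\lambda_0}^\infty \varepsilon\lambda^\varepsilon\mu(\Omega_\lambda)\,d\lambda\leq \tfrac{\varepsilon}{1+\varepsilon}\int_{\widehat{B_0}}(M_\mathcal{B} w)^{1+\varepsilon}\,d\mu$, the high range contributes a term that can be absorbed into the left-hand side as soon as $[w]_{A_\infty}C\,\varepsilon$ is small. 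Combining this with the low-range contribution $\lambda_0^\varepsilon[w]_{A_\infty} w(\widehat{B_0})$ and averaging over $\widehat{B_0}$ gives
\begin{equation*}
\avgint_{\widehat{B_0}}(M_\mathcal{B} w)^{1+\varepsilon}\, d\mu \leq 2\,C_2^\varepsilon [w]_{A_\infty}\, w_{\widehat{B_0}}^{1+\varepsilon},
\end{equation*}
and the choice $\varepsilon\leq 1/(\tau_{\mu\kappa\delta}[w]_{A_\infty})$ with the displayed $\tau_{\mu\kappa\delta}$ is exactly what is needed both to legitimize the absorption and to guarantee $C_2^\varepsilon\leq 3/2$; the factor $(1+1/\delta)^{D_\mu}$ in $\tau_{\mu\kappa\delta}$ is precisely what the $C_2^\varepsilon$ term forces. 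This yields the desired bound $3[w]_{A_\infty} w_{\widehat{B_0}}^{1+\varepsilon}$.

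The main obstacle is the absence of a genuine ``dyadic parent'' for the CZ balls $B_i$: in the Euclidean dyadic case the localization $Mw=M(w\chi_{Q_j})$ on $Q_j$ is automatic, but here only the weaker Lemma \ref{lem:localizedMaximal} is available, at the cost of restricting the localization to $B_i^*\cap\Omega_{L\lambda}$. This is what forces the extra splitting $\Omega_\lambda=(\Omega_\lambda\setminus\Omega_{L\lambda})\cup\Omega_{L\lambda}$ and introduces the doubling factor $L$ into the final constant. Once this structural point is accepted, the remainder of the argument is careful bookkeeping of the geometric constants through the absorption step.
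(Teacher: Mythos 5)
Your proof is correct and follows essentially the same path as the paper's: the same layer-cake split at the threshold $\Gamma w_{\widehat B}$ with $\Gamma=\bigl(2\kappa^2(1+\delta)N/\delta\bigr)^{D_\mu}$, the same Calder\'on--Zygmund decomposition and localization lemma with the same choices $N=(4\kappa^2+\kappa)^2$ and $L=(8\kappa^2)^{D_\mu}$, the same use of property (iv) together with doubling to bound $w(B_i^{**})$, and the same absorption argument at the end. The only difference is bookkeeping: you split $\Omega_\lambda=(\Omega_\lambda\setminus\Omega_{L\lambda})\cup\Omega_{L\lambda}$ and integrate over $\Omega_\lambda$ directly, whereas the paper enlarges the domain to $\bigcup_i B_i^*$ and partitions each $B_i^*$ into three pieces including the portion outside $\Omega_\lambda$; both routes deliver a structural constant of the same form.
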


\begin{proof}
As in the proof of Lemma \ref{lem:RHIMaximal}, we can assume that the weight is localized, in this case on the ball $\widehat B$, namely  $w=w\chi_{\widehat{B}}$, since by definition of the local maximal function, the values of $w$ outside $\widehat{B}$ are ignored (recall \eqref{eq:OmegaLambdaSUBwidehatB}). We write
\begin{equation}\label{eq:omegalambda2}
 \Omega_\lambda =\{x\in \widehat B: M_\mathcal{B}w(x)>\lambda\}.
\end{equation}
For $N\ge (4\kappa^2+\kappa)^2$ and 
$\Gamma=\left(\frac{2\kappa^2(1+\delta)N}{\delta}\right)^{D_\mu}$, we write
\begin{eqnarray*}
\int_{\widehat{B}} (M_\mathcal{B}w)^{1+\varepsilon}\ d\mu & =  & \int_0^\infty \varepsilon \lambda^{\varepsilon-1}M_\mathcal{B}w(\Omega_\lambda )\ d\lambda \\
& =  & \int_0^{\Gamma w_{\widehat{B}}} \varepsilon \lambda^{\varepsilon-1}M_\mathcal{B}w(\Omega_\lambda )\ d\lambda + \int_{\Gamma w_{\widehat{B}}}^\infty \varepsilon \lambda^{\varepsilon-1}M_\mathcal{B}w(\Omega_\lambda)\ d\lambda\\
&=& \Gamma^\varepsilon w_{\widehat{B}}^\varepsilon\int_{\widehat{B}} M_\mathcal{B}w\ d\mu
+ \int_{\Gamma w_{\widehat{B}}}^\infty \varepsilon \lambda^{\varepsilon-1}M_\mathcal{B}w(\Omega_\lambda)\ d\lambda\\
\end{eqnarray*}
Now we use the Calder\'on-Zygmund decomposition from Lemma \ref{lem:localstoppingtime} with  the choice for $N$ above.
For the first term, since we are only considering the values of $w$ on $\widehat B$, we can use the definition of the $A_\infty$ constant. Then we obtain 
\begin{equation}\label{eq:sum}
\int_{\widehat{B}} (M_\mathcal{B}w)^{1+\varepsilon}\ d\mu  \le  \Gamma^\varepsilon w_{\widehat{B}}^\varepsilon [w]_{A_\infty}w({\widehat{B}}) +\int_{\Gamma w_{\widehat{B}}}^\infty \varepsilon \lambda^{\varepsilon-1} \sum_i \int_{B_i^*} M_\mathcal{B}w\ d\mu d\lambda,
\end{equation}
where the family $\{B_i\}_i$ have the properties listed in that lemma. Now we focus on a fixed $B_i^*$ and compute the integral of the maximal function as follows. Consider $L$ as in Lemma \ref{lem:localizedMaximal}, $L=(8\kappa^2)^{D_\mu} $ and the partition of $B_i^*=B_1\cup B_2 \cup B_3$ where 
\begin{equation*}\label{eq:B1B2B3}
B_1=B_i^*\cap \Omega_{L\lambda}, \qquad B_2=B_i^*\cap \Omega_{\lambda}\setminus \Omega_{L\lambda}, \qquad B_3= B_i^*\setminus \Omega_{\lambda}.
\end{equation*}
Then, using Lemma \ref{lem:localizedMaximal}, we obtain 
\begin{eqnarray*}
\int_{B_i^*}M_\mathcal{B}w\ d\mu & = & \int_{B_1}M_\mathcal{B}w\ d\mu +\int_{B_2}M_\mathcal{B}w\ d\mu +\int_{B_3}M_\mathcal{B}w\ d\mu\\
 & \le & \int_{B_i^{**}}M_\mathcal{B}(w\chi_{B_i^{**}})\ d\mu + L\lambda \mu(B_i^*)+\lambda \mu(B_i^*),\\
\end{eqnarray*}
where we use in the last term that the inclusion $B_i^*\subset {\widehat{B}}$ implies that $M_\mathcal{B}w(x)\le\lambda$ for any $x\in B_i^*\setminus \Omega_\lambda$.

Now define, to abbreviate, $\theta=4\kappa^2+\kappa$. Then $B^*=\theta B$ and by the doubling property \eqref{eq:doublingDIL}, we have that $\mu(B_i^*)\le (2\theta)^{D_\mu}\mu(B_i)$. Now, again by  definition of $[w]_{A_\infty}$, we have
\begin{eqnarray*}
\int_{B_i^*}M_\mathcal{B}w\ d\mu &  \le &  [w]_{A_\infty}w(B_i^{**}) + 2L\lambda (2\theta)^{D_\mu}\mu(B_i)\\
  & \le & \left(  [w]_{A_\infty}w_{B_i^{**}}(2\theta)^{2D_\mu} +  2L\lambda (2\theta)^{D_\mu}\right)\mu(B_i)\\
  & \le & 3L(2\theta)^{2D_\mu}[w]_{A_\infty}\lambda \mu(B_i)
\end{eqnarray*}
since, by the choice of $N$, the average of the weight over $B_i^{**}$ is smaller than $\lambda$. Now we can continue with the sum from \eqref{eq:sum}:

\begin{eqnarray*}
\int_{w_{\widehat{B}}}^\infty \varepsilon \lambda^{\varepsilon-1} \sum_i \int_{B_i^*} M_\mathcal{B}w\ d\mu d\lambda & \le & 3L(2\theta)^{2D_\mu}[w]_{A_\infty}\int_{\Gamma w_{\widehat{B}}}^\infty \varepsilon \lambda^{\varepsilon} \sum_i \mu(B_i)\ d\lambda\\
& \le & 3L(2\theta)^{2D_\mu}[w]_{A_\infty}\int_{\Gamma w_{\widehat{B}}}^\infty \varepsilon \lambda^{\varepsilon} \mu(\Omega_\lambda)\ d\lambda\\
& \le & \frac{3\varepsilon L(2\theta)^{2D_\mu}[w]_{A_\infty}}{1+\varepsilon}\int_{\widehat{B}} (M_\mathcal{B}w)^{1+\varepsilon}\ d\mu\\
\end{eqnarray*}
Finally, collecting all estimates and taking the average over ${\widehat{B}}$, we obtain 
\begin{equation*}
\avgint_{\widehat{B}} (M_\mathcal{B}w)^{1+\varepsilon}\ d\mu  \le  \Gamma^\varepsilon w_{\widehat{B}}^{1+\varepsilon} [w]_{A_\infty} +
\frac{3\varepsilon L(2\theta)^{2D_\mu}[w]_{A_\infty}}{1+\varepsilon}\avgint_{\widehat{B}} (M_\mathcal{B}w)^{1+\varepsilon}\ d\mu.
\end{equation*}
By the hypothesis on $\varepsilon$, we have that 
\begin{equation*}
 \frac{3\varepsilon L(2\theta)^{2D_\mu}[w]_{A_\infty}}{1+\varepsilon}\le \frac{1}{2},
\end{equation*}
and therefore the last term can be absorbed by the left hand side. In addition, one can verify (with some tedious computations) that $\Gamma^\varepsilon\le \frac{3}{2}$ for $\varepsilon\le\frac{1}{\tau_{\mu\kappa\delta}}$. We obtain 
\begin{equation*}
\avgint_{\widehat{B}} (M_\mathcal{B}w)^{1+\varepsilon}\ d\mu  \le 3[w]_{A_\infty}w_{\widehat{B}}^{1+\varepsilon},
\end{equation*}
and the proof is complete.
\end{proof}

Now we are ready to present the new proof of the weak Reverse H\"older Inequality with sharp exponent.

\begin{proof}[Proof of Theorem \ref{thm:SharpRHI}]
Let $w$ be an $A_\infty$ weight
 and let $B$ be a fixed ball. We remark here that  the idea is to use Lemma \ref{lem:generalRHMaximal} where we made the assumption on the localization of the weight, namely $w=w\chi_{\widehat B}$.
Note that in that lemma, the maximal operator depends on $\delta$, and any positive $\delta$ will work. But due to the blow-up of $\varepsilon$ on the endpoint $\delta=0$, we will choose $\delta$ away from 0, namely $\delta=1$. Therefore, the hypothesis on $\varepsilon$ that we will use are that $0<\varepsilon \le\frac{1}{\tau_{\mu\kappa}[w]_{A_\infty}}$, with $\tau_{\mu\kappa}= 6(32\kappa^2(4\kappa^2+\kappa)^2)^{D_\mu}$. With this assumption on $\delta$, the inequality we need to prove is the following:
\begin{equation}\label{eq:weakRHI}
\left(\avgint_B w^{1+\varepsilon}\ d\mu\right)^\frac{1}{1+\varepsilon}\leq 2(4\kappa)^{D_\mu}\avgint_{\widehat B} w\ d\mu, 
\end{equation}
for the any $\varepsilon>0$ as above.

As in Section \ref{sec:euclidean} above, we can bound the weight by the maximal function. Then,
 \begin{equation}\label{eq:wBoundedbyMaximal}
  \int_Bw^{1+\varepsilon}\ d\mu  \leq  \int_{\widehat{B}}(M_\mathcal B w)^{\varepsilon}\ wd\mu.
 \end{equation}

For the term on the right hand side, we proceed in a similar way as in the previous lemma, considering a Calder\'on-Zygmund decomposition of the level set 
\begin{equation*}
 \Omega_\lambda =\{x\in \widehat{B}: M_\mathcal{B}w(x)>\lambda\}.
\end{equation*}
with  $N\ge \kappa(4\kappa+1)$. Then,
\begin{eqnarray*}
\int_{\widehat{B}} (M_\mathcal{B}w)^{\varepsilon}\ wd\mu & =  & \int_0^\infty \varepsilon \lambda^{\varepsilon-1}w(\Omega_\lambda )\ d\lambda \\
&=& \int_0^{\Gamma w_{\widehat{B}}} \varepsilon \lambda^{\varepsilon-1}w(\widehat{B})\ d\lambda
+ \int_{\Gamma w_{\widehat{B}}}^\infty \varepsilon \lambda^{\varepsilon-1}w(\Omega_\lambda)\ d\lambda\\
&\le& \Gamma^\varepsilon w_{\widehat{B}}^\varepsilon w(\widehat{B}) + \int_{\Gamma w_{\widehat{B}}}^\infty \varepsilon \lambda^{\varepsilon-1} \sum_i w(B_i^*)\ d\lambda\\
\end{eqnarray*}
Where $\Gamma=\left(4\kappa^2N\right)^{D_\mu}$ and we use, as before, that $B_i^*=\theta B_i = (4\kappa^2+\kappa)B_i$ and thus
\begin{equation*}
w(B_i^*)=w_{B_i^*}\mu(B_i^*)\le \lambda (2\theta)^{D_\mu}\mu(B_i).
\end{equation*}
Therefore,
\begin{eqnarray*}
\int_{\widehat{B}} (M_\mathcal{B}w)^{\varepsilon}\ wd\mu  &\le& \Gamma^\varepsilon  w_{\widehat{B}}^\varepsilon w(\widehat{B}) + \varepsilon (2\theta)^{D_\mu}\int_{\Gamma w_{\widehat{B}}}^\infty \lambda^{\varepsilon} \sum_i w(B_i)\ d\lambda\\
 &\le&\Gamma^\varepsilon  w_{\widehat{B}}^\varepsilon w(\widehat{B}) + \varepsilon (2\theta)^{D_\mu}\int_{\Gamma w_{\widehat{B}}}^\infty \lambda^{\varepsilon} \mu(\Omega_\lambda)\ d\lambda\\
 &\le& \Gamma^\varepsilon w_{\widehat{B}}^\varepsilon w(\widehat{B}) + 
 \frac{\varepsilon (2\theta)^{D_\mu}}{1+\varepsilon}\int_{\widehat{B}} (M_\mathcal{B}w)^{1+\varepsilon}\ d\mu.
\end{eqnarray*}
Then, averaging in \eqref{eq:wBoundedbyMaximal} over $\widehat B$, we obtain
\begin{equation*}
\frac{\mu(B)}{\mu(\widehat{B})}\avgint_B w^{1+\varepsilon}\ d\mu \le \Gamma^\varepsilon w_{\widehat B}^{1+\varepsilon}+ \frac{\varepsilon (2\theta)^{D_\mu}}{1+\varepsilon}
\avgint_{\widehat{B}} (M_\mathcal{B}w)^{1+\varepsilon}\ d\mu.
\end{equation*}
Now we note that by hypothesis we have that $\varepsilon$ is in the range  allowed in Lemma \ref{lem:generalRHMaximal}. We use again the doubling property \eqref{eq:doublingDIL} and then we obtain the desired estimate:
\begin{eqnarray*}
\avgint_B w^{1+\varepsilon}\ d\mu & \le &  (4\kappa)^{D_\mu} \left(\Gamma^\varepsilon w_{\widehat B}^{1+\varepsilon}+ \frac{\varepsilon (2\theta)^{D_\mu}}{1+\varepsilon}\avgint_{\widehat B} (M_\mathcal{B}w)^{1+\varepsilon}\ d\mu\right)\\
&\le & (4\kappa)^{D_\mu} \left(\Gamma^\varepsilon + \frac{\varepsilon (2\theta)^{D_\mu}3[w]_{A_\infty}}{1+\varepsilon}\right)\left(\avgint_{\widehat B} w\ d\mu\right)^{1+\varepsilon}\\
& \le & 2(4\kappa)^{D_\mu} \left(\avgint_{\widehat B} w\ d\mu\right)^{1+\varepsilon}
\end{eqnarray*}
To check last inequality, it is easy to verify that with this choice of $\varepsilon$, we also have that $ \frac{\varepsilon (2\theta)^{D_\mu}3[w]_{A_\infty}}{(1+\varepsilon)}\le \frac{1}{2}$ and, as before, $\Gamma^\varepsilon \le \frac{3}{2}$.
This completes the proof of the weak version of the RHI stated in \eqref{eq:weakRHI}.
\end{proof}

\subsection{Precise open property for    Muckenhoupt  classes}

\begin{proof}[Proof of Theorem \ref{thm:Apselfimprove}] Let $w \in A_p$ and denote, as usual, the dual weight $w^{1-p'}=\sigma$. We choose $\varepsilon =\frac{p-1}{r(\sigma)' }$, which is the same as $r(\sigma)=\frac{p-1}{p-\varepsilon-1}$ (observe that $\varepsilon>0$ and $p-\varepsilon>1$). We can easily compute the following

\begin{eqnarray*}
\left(\avgint_B w^{1-(p-\varepsilon)'}\ d\mu\right)^{p-\varepsilon-1}
& = & \left(\avgint_B w^{(1-p')r(\sigma)}\ d\mu\right)^\frac{p-1}{r(\sigma)}\\
& \le & \left(2(4\kappa)^{D_\mu}\avgint_{\widehat B} \sigma\ d\mu\right)^{p-1}
\end{eqnarray*}
by the sharp weak RHI. Now, for the $A_{p-\varepsilon}$ constant of $w$, we proceed as follows. Let $B$ be any ball. Then, by the doubling property \eqref{eq:doublingDIL} of the measure, we have that
\begin{equation*}
\avgint_B w\ d\mu \left(\avgint_B w^{1-(p-\varepsilon)'}\right)^{p-\varepsilon-1}
\le  2^{p-1}(4\kappa)^{pD_\mu }
\avgint_{\widehat B} w\ d\mu\left(\avgint_{\widehat B} \sigma\right)^{p-1}
\end{equation*} 
Taking the supremum over all balls, we obtain 
\begin{equation*}
 [w]_{A_{p-\varepsilon}} \le  2^{p-1}(4\kappa)^{pD_\mu}[w]_{A_p},
\end{equation*} 
and therefore the proof is complete.
\end{proof}

\subsection{Sharp Buckley's theorem with mixed constants}

\begin{proof}[Proof of Theorem \ref{thm:CorsharpBuckley}]
We start by pointing out that we have the following analogue of the standard case of $\mathbb{R}^d$ (with cubes) for the known weak norm estimate for the maximal function:
\begin{equation}\label{eq:Maximal-weak}
\|M\|_{L^{q,\infty}(w)} \le (2\theta)^{D_\mu }[w]_{A_q}^{\frac1q}     \qquad 1<q<\infty,
\end{equation} 
where  $\theta=4\kappa^2+\kappa$ and $\kappa$ is the quasimetric constant. 
Consider, for any nonnegative measurable function $f$ and $\lambda>0$, the level set $\Omega_\lambda=\{x\in \mathcal{S}: Mf(x)>\lambda\}$. By a Vitali type covering lemma (\cite{SW}, Lemma 3.3) we can obtain a countable family of balls $\{B_j\}_j$ such that 
\begin{equation*}
\frac{1}{\mu(B_j)}\int_{B_j}f\ d\mu >\lambda \qquad \mbox{ and }\quad 
\Omega_\lambda \subset \bigcup_j B_j^*
\end{equation*}
where, as before, $B^*=\theta B$. Therefore
\begin{eqnarray*}
 \lambda^qw(\Omega_\lambda) & \le & \lambda^q\sum_j w(B_j^*)\\
& \le & \sum_j w(B_j^*)\left(\frac{1}{\mu(B_j)}\int_{B_j}fw^\frac{1}{q}w^{-\frac{1}{q}}\ d\mu\right)^q\\
& \le & (2\theta)^{D_\mu q}\sum_j \frac{w(B_j^*)}{\mu(B^*_j)}\left(\frac{1}{\mu(B^*_j)}\int_{B^*_j}\sigma\ d\mu\right)^{q-1}
\left(\int_{B_j}f^qw\ d\mu\right)^q\\
&\le &   (2\theta)^{D_\mu q}[w]_{A_q}\|f\|^q_{L^q(w)}
\end{eqnarray*}
and then \eqref{eq:Maximal-weak} follows. We will also use that, for $f_t:=f\chi_{f>t}$, the following inclusion holds:
\begin{equation*}
 \{x\in  \mathcal S: Mf(x)>2t \} \subset \{x\in\mathcal S: Mf_t(x)>t \}
\end{equation*}

Now, we write the integral from the $L^p$ norm and compute
\begin{eqnarray*}
\|Mf\|_{L^p(w)}^p & = & p\int_{0}^{\infty} t^{p} w \{y\in \mathcal S:Mf(y) > t\} \frac{dt}{t} \\
& = & p2^p \int_{0}^{\infty} t^{p} w \{y\in \mathcal S:Mf(y) > 2t\} \frac{dt}{t}\\
& \leq & p 2^p \int_{0}^{\infty}  t^{p} w \{y\in \mathcal S:Mf_t(y) > t\}
  \frac{dt}{t}\\
& \leq  & p 2^p (2\theta)^{D_\mu (p-\varepsilon)} [w]_{A_{p-\varepsilon}} \int_{0}^{\infty}  t^{p} \int_{\mathcal S} 
  \frac{  f_t^{p-\varepsilon} }  {t^{p-\varepsilon}}  w\ d\mu  \frac{dt}{t}\\
& \leq & p2^{2p-1}(4\kappa)^{pD_\mu} (2\theta)^{D_\mu (p-\varepsilon)}  [w]_{A_{p}}\int_{\mathcal S}   \int_{ 0 }^{ f(y) } t^{\varepsilon}\frac{dt}{t}  f^{p-\varepsilon}  w\ d\mu \\
& = &   p2^{2p-1} (4\kappa)^{pD_\mu} (2\theta)^{D_\mu (p-\varepsilon)} \frac{[w]_{A_{p}}}{\varepsilon} \int_{\mathcal S}    f^{p}  w\ d\mu
\end{eqnarray*}
By the precise open property we can take $\varepsilon=\frac{p-1}{r(\sigma)' }= \frac{p-1}{ 1+\tau_{\kappa\mu}[\sigma]_{A_{\infty}} } $. With this choice, we finally obtain that 
\begin{equation*}
\|M\|_{L^p(w)}^p \le  
\frac{p2^{2p-1} (4\kappa)^{pD_\mu}(2\theta)^{D_\mu (p-\varepsilon)} }{p-1}( 1+\tau_{\kappa\mu}[\sigma]_{A_{\infty}})[w]_{A_{p}} \int_{\mathcal S}    f^{p}  w\ d\mu,
\end{equation*}
and this yields \eqref{eq:HLLpAinfty} and therefore the proof is complete.
\end{proof}


\end{document}